\newtheorem{thm}{Theorem}[section]
\newtheorem{lem}{Lemma}[section]
\theoremstyle{definition}
\newtheorem{defin}{Definition}[section]
\begin{document}
\title{The Super Patalan Numbers}
\vspace{.5in}

\author[T. M. Richardson]{Thomas M. Richardson \\
Ada, MI  49301}

\email{ribby@umich.edu}

\begin{abstract}
We introduce the super Patalan numbers, a generalization of the super Catalan numbers 
in the sense of Gessel, and prove a number of properties analagous to those of the
super Catalan numbers. The super Patalan numbers generalize the super Catalan numbers similarly to
how the Patalan numbers generalize the Catalan numbers. 
\end{abstract}

\maketitle

\section{Introduction}
We introduce the super Patalan numbers as a gneralization of the super Catalan numbers.
The super Catalan numbers \cite[A068555]{OEIS} were studied by Gessel in his paper on the super ballot numbers \cite{GESSEL}. 
(The term super Catalan numbers is also used to refer to a different sequence, we are 
generalizing the term as used by Gessel.)
Just as the super Catalan numbers form a two dimensional array that extends the Catalan numbers,
the super Patalan numbers of order $p$ form a two dimensional array that extends the Patalan numbers of order $p$.

We start with the definitions of the super Catalan numbers and of the Patalan numbers.

\begin{defin}
Define the \emph{super Catalan numbers} $S(m,n)$ by
\begin{equation*} S(m,n) = \frac{(2m)!(2n)!}{m!n!(m+n)!}.
\end{equation*}
\end{defin}

The Catalan numbers $C_n$ are contained in the super Catalan numbers as
$2C_n = S(n,1) = \frac{2(2n)!}{n!(n+1)!}$.

\begin{defin}
\label{PATALAN_DEF}
Let $p$ be a positive integer with $p>1,$
and let $q$ be a positive integer with $q<p$.
Define the \emph{Patalan numbers of order $p$} to be 
the sequence $a(n)$ with 
$a(0) = 1$, and
\begin{equation}
a(n) = p(pn-1)a(n-1)/(n+1).
\end{equation}
Also define the \emph{$(p,q)$-Patalan numbers} to be
the sequence $b(n)$ with
$b(0) = q$, and
\begin{equation}
b(n) = p(pn-q)b(n-1)/(n+1).
\end{equation}
\end{defin}

The Patalan numbers of order $p$ \cite[A025748, A025749, \ldots, A025757]{OEIS} generalize the Catalan numbers.
In particular the Catalan numbers are the Patalan numbers of order $2$. Also, the Patalan
numbers of order $p$ have generating function $\frac{1-\sqrt{1-p^2x}}{px}$, which generalizes the generating
function of the Catalan numbers.

Now we define the super Patalan numbers as an extension of the Patalan numbers,
and generalizing the super Catalan numbers. 

\begin{defin}
\label{SUPER_PATALAN_DEF}
Define the sequence $Q(i,j)$ of \emph{$(p,q)$-super Patalan numbers} by
\begin{equation}
\label{SPDEF1}
Q(0,0) = 1, 
\end{equation}
\begin{equation}
\label{SPDEF2}
Q(i,0) = p(pi-q)Q(i-1,0)/i,
\end{equation}
and 
\begin{equation}
\label{SPDEF3}
Q(i,j) = p(pj-p+q) Q(i,j-1)/(i+j).
\end{equation}
Let the \emph{super Patalan numbers of order $p$} be the $(p,1)$-super Patalan numbers.
\end{defin}

The super Patalan numbers contain the 
Patalan numbers similarly to how the super Catalan numbers contain the Catalan numbers,
but they do not have quite as simple an
expression as the super Catalan numbers.
In particular, they do not form a symmetric array.
While the super Patalan numbers are not symmetric, they do have a twisted symmetry in that
the arrays of $(p,q)$-super Patalan numbers and $(p,p-q)$-super Patalan numbers are transposes of each other.

The Patalan numbers are contained in the super Patalan numbers just as
the Catalan numbers are contained in the super Catalan numbers.
If $a(n)$ is the sequence of Patalan numbers of order $p$, 
and $P(i,j)$ are the super Patalan numbers of order $p$, 
then the Patalan numbers are contained in the super Patalan numbers as
\begin{equation}
\label{COLUMN_ONE_EQN}
pa(n) = P(n,1).
\end{equation}

It is the author's opinion that to be consistent with equation \eqref{COLUMN_ONE_EQN} 
concerning column $1$ of the super Patalan matrix, 
the Patalan numbers of order $p$
should start $1, \binom{p}{2}$ \cite[A097188]{OEIS}, 
and not start $1,1,\binom{p}{2}$ \cite[A025748]{OEIS}. 
The fact that the Catalan numbers start $1,1$ is explained by 
the Catalan numbers being the Patalan number of order $2$, and $\binom{2}{2} = 1$.

\section{Generating functions}

\begin{thm}
\label{PATALAN_THM1}
The $(p,q)$ super Patalan numbers $Q$ satisfy the identity
\begin{equation}
\label{BINOMIAL_IDENT}
Q(m,n)= (-1)^np^{2(m+n)}\binom{m-q/p}{m+n}.
\end{equation}
\end{thm}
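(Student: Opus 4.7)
The plan is a two-stage induction based directly on the defining recurrences (\ref{SPDEF1})--(\ref{SPDEF3}). The right-hand side
$R(m,n) := (-1)^n p^{2(m+n)}\binom{m-q/p}{m+n}$
is itself defined by a product, so the whole proof reduces to comparing ratios.

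First I would verify the base case: at $(m,n) = (0,0)$, $R(0,0) = \binom{-q/p}{0} = 1 = Q(0,0)$, matching (\ref{SPDEF1}). Next I would induct on $m$ along the column $n=0$, using (\ref{SPDEF2}). The conjectured formula gives
\begin{equation*}
\frac{R(m,0)}{R(m-1,0)} = p^{2}\cdot\frac{m - q/p}{m} = \frac{p(pm-q)}{m},
\end{equation*}
which is exactly the multiplier in (\ref{SPDEF2}); so $R$ and $Q$ agree on the whole first column.

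Then, with $m$ fixed, I would induct on $n\ge 1$ using (\ref{SPDEF3}). The key calculation is the single binomial ratio
\begin{equation*}
\frac{\binom{m-q/p}{m+n}}{\binom{m-q/p}{m+n-1}} = \frac{(m-q/p)-(m+n)+1}{m+n} = -\frac{pn-p+q}{p(m+n)}.
\end{equation*}
Combining this with the sign flip $(-1)^n/(-1)^{n-1} = -1$ and the factor $p^{2}$ gives
\begin{equation*}
\frac{R(m,n)}{R(m,n-1)} = \frac{p(pn-p+q)}{m+n},
\end{equation*}
which is precisely the factor in (\ref{SPDEF3}). Thus $R$ satisfies the same three recurrences as $Q$ with the same initial value, so $Q=R$.

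There is no real obstacle — the argument is bookkeeping. The only thing to watch is that the sign $(-1)^n$ on the right comes from the binomial's numerator being pushed past $0$, i.e.\ from writing $m - q/p - k$ as the negative of a positive quantity when $k$ exceeds $m$; the ratio computation above encapsulates this automatically, so there is no need to split into cases on the sign of $m - q/p - (m+n)+1$.
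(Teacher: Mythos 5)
Your proposal is correct and follows essentially the same approach as the paper: define $R(m,n)=(-1)^np^{2(m+n)}\binom{m-q/p}{m+n}$ and verify that it satisfies the defining recurrences \eqref{SPDEF1}--\eqref{SPDEF3}, with the key step being the same binomial ratio computation for \eqref{SPDEF3}. The only difference is that you also write out the (routine) verifications of the base case and the column recurrence \eqref{SPDEF2}, which the paper leaves to the reader.
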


\begin{proof}
Let $R(m,n)= (-1)^np^{2(m+n)}\binom{m-q/p}{m+n}$.
Then $R$ satisfies equations \eqref{SPDEF1}-\eqref{SPDEF3}.
We give some details showing that $R$ satisfies \eqref{SPDEF3}:
\begin{eqnarray}
R(i,j) & = & (-1)^jp^{2(i+j)}\binom{i-q/p}{i+j} \\
& = & (-1)^{j}p^{2(i+j)}\frac{-j+1-q/p}{i+j}\binom{i-q/p}{i+j-1} \\
& = & (-1)^{j-1}p^{2(i+j-1)}\frac{p(pj-p+q)}{i+j}\binom{i-q/p}{i+j-1} \\
& = & \frac{p(pj-p+q)}{i+j}R(i,j-1).
\end{eqnarray}
\end{proof}

Equation \eqref{BINOMIAL_IDENT} generalizes an identity of Gessel \cite[unlabelled equation before equation (31)]{GESSEL}. This indicates that $P(m,n)$ is the coefficient of $x^{m+n}$
in the generating function of 
$(-1)^m(1-p^2x)^{m-q/p}$.

More generally, the above definitions may be extended to define super Patalan numbers for all $m$ and $n$.
\begin{defin}
\label{SUPER_PATALAN_EXTENDED}
Let $m,n$ be integers.
Define the \emph{extended $(p,q)$-super Patalan numbers} $E(m,n)$ to be the coefficient of $x^{m+n}$
in the generating function of
$(-1)^m(1-p^2x)^{m-q/p}$.
\end{defin}

While $E$ is defined in terms of the generating functions of its rows, the twisted symmetry of the super Patalan matrix implies that
$E(m,n)$ is also the coefficient of $x^{m+n}$ in $(-1)^n(1-p^2x)^{n-(p-q)/p}$.

The lower triangular matrix $L$ formed by permuting the columns of $E$ has the interesting property that it
has order $2$ under matrix multiplication.

\begin{thm}
\label{EXTENDED_PATALAN_THM}
Let $L$ be the lower triangular matrix given by $L(m,n) = E(m,-n)$, where $E$ is an extended super Patalan matrix. Then $L^2$ is the identity.
\end{thm}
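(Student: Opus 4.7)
My plan is to turn the matrix identity $L^2=I$ into a concrete sum identity over binomial coefficients, and then collapse that sum via upper negation followed by Vandermonde convolution. The only preparatory ingredient I need is a closed form for the entries of $L$, which the generalized binomial theorem provides immediately from the definition.

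Expanding $(1-p^2x)^{m-q/p}$ and reading off the coefficient of $x^{m-n}$ in $(-1)^m(1-p^2x)^{m-q/p}$ yields
\begin{equation*}
L(m,n)=E(m,-n)=(-1)^n p^{2(m-n)}\binom{m-q/p}{m-n}.
\end{equation*}
This vanishes for $m<n$ (the lower index is a negative integer), confirming that $L$ is lower triangular, and the diagonal is $L(n,n)=(-1)^n$. Multiplying two such entries and summing produces
\begin{equation*}
(L^2)(m,n)=(-1)^n p^{2(m-n)}\sum_{k=n}^{m}(-1)^k\binom{m-q/p}{m-k}\binom{k-q/p}{k-n}.
\end{equation*}
The inconvenient feature is that the summation index $k$ appears in the top of the second binomial, so I would apply upper negation
\begin{equation*}
\binom{k-q/p}{k-n}=(-1)^{k-n}\binom{q/p-n-1}{k-n}
\end{equation*}
to move $k$ out of the top. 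The alternating signs then telescope, and after substituting $j=k-n$ the remaining sum $\sum_{j=0}^{m-n}\binom{m-q/p}{m-n-j}\binom{q/p-n-1}{j}$ is a direct Vandermonde convolution evaluating to $\binom{m-n-1}{m-n}$.

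To finish, I would observe that $\binom{m-n-1}{m-n}$ equals $1$ when $m=n$ (since $\binom{-1}{0}=1$) and $0$ whenever $m>n$ (the lower index strictly exceeds a nonnegative upper index), so $(L^2)(m,n)=p^{2(m-n)}\delta_{m,n}=\delta_{m,n}$. I do not anticipate a serious obstacle; the only care required is in the sign bookkeeping through the upper-negation step, and in noting that Vandermonde's convolution is valid here because it is a polynomial identity in its upper parameters and the summation has finite integer length $m-n+1$.
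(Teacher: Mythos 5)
Your proof is correct, but it proceeds at the level of binomial coefficients where the paper works at the level of generating functions, so the two arguments are genuinely different in mechanics even though they encode the same identity. The paper observes that the $(m,n)$ entry of $L^2$ is the convolution of row $m$ of $E$ with column $-n$ of $E$, and that the column generating function (obtained from the twisted symmetry, which makes $E(m,n)$ the coefficient of $x^{m+n}$ in $(-1)^n(1-p^2x)^{n-(p-q)/p}$) multiplies against the row generating function to give $(-1)^{m-n}(1-p^2x)^{m-n-1}$, a polynomial of degree $m-n-1$ whose coefficient of $x^{m-n}$ vanishes for $m>n$. Your upper-negation step $\binom{k-q/p}{k-n}=(-1)^{k-n}\binom{q/p-n-1}{k-n}$ is precisely the coefficient-level shadow of that twisted symmetry: it converts the second factor into a column-indexed binomial, after which your Vandermonde convolution $\sum_j\binom{m-q/p}{m-n-j}\binom{q/p-n-1}{j}=\binom{m-n-1}{m-n}$ is exactly the extraction of the coefficient of $x^{m-n}$ from $(1-p^2x)^{(m-q/p)+(q/p-n-1)}$. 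What your route buys is self-containedness: you only ever use the row definition of $E$, never needing to justify the column generating function, at the cost of some sign bookkeeping, all of which you carry out correctly (including the diagonal case via $\binom{-1}{0}=1$, which the paper leaves implicit since it treats only $n<m$). One cosmetic remark: the signs $(-1)^k(-1)^{k-n}$ combine to the constant $(-1)^{-n}$ rather than ``telescope.''
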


\begin{proof}
Consider the $(m,n)$ entry of $L^2$ for $n < m$. The product of row $m$ of $L$ and column $n$ of $L$ is
the convolution of row $m$ of $E$ and column $-n$ of $E$. 
The generating function of row $m$ of $E$ is $(-1)^m(1-p^2x)^{m-q/p}$, 
while the generating function of column $-n$ of $E$ is $(-1)^{-n}(1-p^2x)^{-n-(p-q)/p}$. 
Thus the $(m,n)$ entry of $L^2$ is
the coefficient of $x^{m-n}$ in $(-1)^{m-n}(1-p^2x)^{m-n-1}$, which equals $0$.
\end{proof}

Next we consider the two variable generating function of $P$.
\begin{thm}
Let $F(x,y)=\sum P(i,j)x^iy^j$ be the generating function of the super Patalan numbers $P(i,j)$. Then
\begin{equation}
\label{GEN_FCN_TWO_VAR}
F(x,y) = \bigg(\frac{x}{(1-p^2x)^{(p-1)/p}}+\frac{y}{(1-p^2y)^{1/p}}\bigg)\frac{1}{x+y-p^2xy}.
\end{equation}
\end{thm}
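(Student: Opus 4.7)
The plan is to clear the denominator in \eqref{GEN_FCN_TWO_VAR} and verify the resulting identity coefficient by coefficient. Setting $A(x) = (1-p^2x)^{-(p-1)/p}$ and $B(y) = (1-p^2y)^{-1/p}$, the claim is equivalent to
\[ (x + y - p^2 xy)\,F(x,y) = xA(x) + yB(y). \]

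The first step is to identify $A$ and $B$ as the column and row generating functions $\sum_i P(i,0)x^i$ and $\sum_j P(0,j)y^j$. By Theorem~\ref{PATALAN_THM1} with $q=1$ we have $P(i,0) = p^{2i}\binom{i-1/p}{i}$ and $P(0,j) = (-p^2)^j\binom{-1/p}{j}$, so the standard identity $\sum_k \binom{k+\alpha}{k}z^k = (1-z)^{-\alpha-1}$ applied with $\alpha = -1/p$, together with the generalized binomial theorem, yield the two required forms.

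Next I would extract the coefficient of $x^iy^j$ on both sides. On the left, the three terms $xF$, $yF$, and $-p^2xyF$ contribute $P(i-1,j)$, $P(i,j-1)$, and $-p^2 P(i-1,j-1)$ respectively, with the convention that terms with negative indices vanish. On the right, the coefficient is $P(i-1,0)$ along the edge $j=0$ (for $i\geq 1$), $P(0,j-1)$ along the edge $i=0$ (for $j\geq 1$), and zero at $(0,0)$ and at all interior lattice points $i,j\geq 1$. The two boundary cases match automatically from the identifications of $A$ and $B$, so the theorem reduces to the single identity
\[ P(i-1,j) + P(i,j-1) = p^2 P(i-1,j-1) \qquad (i,j\geq 1). \]

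To finish, I would verify this identity using the binomial form \eqref{BINOMIAL_IDENT}. After substitution and division by the common factor $(-1)^{j-1}p^{2(i+j-1)}$, the identity reduces to
\[ \binom{i - 1/p}{i+j-1} = \binom{i-1-1/p}{i+j-1} + \binom{i-1-1/p}{i+j-2}, \]
which is Pascal's rule $\binom{\alpha+1}{k} = \binom{\alpha}{k} + \binom{\alpha}{k-1}$ applied with $\alpha = i-1-1/p$ and $k = i+j-1$. The only real obstacle is spotting that clearing the denominator reduces everything to a single three-term recurrence verifiable by Pascal's identity; after that, the remaining bookkeeping is mechanical.
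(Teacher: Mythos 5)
Your proof is correct and follows essentially the same route as the paper: both rest on identifying the row and column generating functions via Theorem~\ref{PATALAN_THM1} and on the three-term recurrence $p^2P(i,j)=P(i,j+1)+P(i+1,j)$, with your denominator-clearing and coefficient comparison being just the reverse of the paper's step of solving the functional equation \eqref{RECURRENCE_EQN} for $F$. The one genuine addition is that you actually verify the recurrence \eqref{RECURRENCE_TWO_VAR} from the binomial form via Pascal's rule, a step the paper asserts without proof.
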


\begin{proof}
By Theorem \ref{PATALAN_THM1}, the generating function of the first row of the super Patalan matrix of order $p$ is $g(y)=(1-p^2y)^{-1/p}$ and
the generating function of the first column of the super Patalan matrix of order $p$ is $f(x)=(1-p^2x)^{-(p-1)/p}$.

We will take advantage of the recurrence
\begin{equation}
\label{RECURRENCE_TWO_VAR}
p^2P(i,j) = P(i,j+1) + P(i+1,j).
\end{equation}
Equation \eqref{RECURRENCE_TWO_VAR} implies the equation 
\begin{equation}
\label{RECURRENCE_EQN}
p^2F(x,y) = \frac{F(x,y)-g(y)}{x}+\frac{F(x,y)-f(x)}{y}.
\end{equation}
Solving equation \eqref{RECURRENCE_EQN} for $F(x,y)$ gives equation \eqref{GEN_FCN_TWO_VAR}, as required.
\end{proof}
Equation \eqref{RECURRENCE_TWO_VAR} generalizes an identity attributed to D. Rubenstein by Gessel \cite[equation (36)]{GESSEL}.
Also, equation \eqref{GEN_FCN_TWO_VAR} generalizes a similar expression given by Gessel for the generating function of the super Catalan numbers \cite[equation (37)]{GESSEL}.

\section{Convolutional Recurrence}
The Catalan numbers have a very simple, well known, and interesting convolutional recurrence,
\begin{equation}
\label{CATALAN_RECURRENCE}
C_n = \sum_{k=0}^{n-1} C_kC_{n-k}.
\end{equation}
We show that the Patalan numbers of order $p$ have a similar convolutional recurrence of degree $p$,
and give the explicit recurrence for the Patalan numbers of order 3.

One could derive the recurrences by brute force, exploiting the fact that the generating function for column $1$
of the extended super Patalan numbers is given by the expression $-(1-p^2x)^{1/p}$. 
We will instead work directly with the generating function of the Patalan numbers.
Let $A(x)$ be the generating function of the Patalan numbers of order $p$, 
so that $\displaystyle A(x) = \frac{1-(1-p^2x)^{1/p}}{px}$.
Gessel observed that for $p=3$, $xA(x)$ is the compositional inverse of $x-3x^2+3x^3$ \cite[A097188]{OEIS}.
More generally, $xA(x)$ is the compositional inverse of
$\displaystyle \frac{1-(1-px)^p}{p^2} = -\sum_{k=1}^p \binom{p}{k} p^{k-2}(-x)^k$.
Applying the compositional inverse to the generating function results in a coefficient of zero for the higher degree terms.
Thus we can set the compositional inverse equal to $0$, solve for $x$, and derive a convolutional recurrence from the expression for $x$.
Setting the compositional inverse equal to zero and solving for $x$ gives 
\begin{equation}
\label{COMPOSITIONAL_INVERSE_EQN}
x = \sum_{k=2}^p \binom{p}{k} p^{k-2}(-x)^k.
\end{equation}
Because we are working with the compositional inverse of $xA(x)$, not of $A(x)$, 
we have to be careful when we translate equation \eqref{COMPOSITIONAL_INVERSE_EQN}
to a convolutional recurrence, by subtracting the number of factors of each term from the total degree in the recurrence.
We thus get a recurrence for the Patalan numbers
\begin{equation}
\label{CONVOLUTIONAL_RECURRENCE_EQN}
a(n) = \sum_{k=2}^p \binom{p}{k} p^{k-2}(-1)^k \sum_{i_1+\ldots+i_k = n-k+1} \prod a(i_j)
\end{equation}
It is easily verified that for $p=2$, equation \eqref{CONVOLUTIONAL_RECURRENCE_EQN} reduces to equation \eqref{CATALAN_RECURRENCE}.
For $p=3$, equation \eqref{CONVOLUTIONAL_RECURRENCE_EQN} reduces to 
\begin{equation}
\label{PATALAN3_RECURRENCE}
a(n) = \sum_{k=0}^{n-1} 3a(k)a(n-k-1) - \sum_{i+j+k=n-2} 3a(i)a(j)a(k).
\end{equation}

Equation \eqref{CONVOLUTIONAL_RECURRENCE_EQN} for $n=1$ has only one non-trivial term on the right hand side, 
and it implies that $a(1) = \binom{p}{2}$.

\section{Factorization of the super Patalan matrix}
\begin{defin} 
Define the \emph{reciprocal Pascal matrix} to be the matrix $R$ with $R(i,j) = \binom{i+j}{i}^{-1}$.
\end{defin}
\begin{lem}
\label{FACTORIZATION_LEMMA}
Let $Q$ be the $(p,q)$-super Patalan numbers, and let $G_{p,q}$ be the diagonal matrix with $G_{p,q}(i,i) = Q(i,0)$.
Then 
\begin{equation}
\label{FACTORIZATION_EQN}
Q = G_{p,q} R G_{p,p-q}.
\end{equation}
\end{lem}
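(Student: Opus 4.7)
The plan is to reduce the claimed matrix identity to a single binomial-coefficient identity by invoking the closed form from Theorem~\ref{PATALAN_THM1}. Since $G_{p,q}$ and $G_{p,p-q}$ are diagonal, the $(m,n)$-entry of $G_{p,q} R G_{p,p-q}$ is simply $Q(m,0) \binom{m+n}{m}^{-1} Q'(n,0)$, where I write $Q'$ for the $(p,p-q)$-super Patalan numbers. So it suffices to show
\[ Q(m,n) = Q(m,0) \, \binom{m+n}{m}^{-1} \, Q'(n,0). \]

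Applying Theorem~\ref{PATALAN_THM1} three times rewrites the three quantities as $(-1)^n p^{2(m+n)} \binom{m-q/p}{m+n}$, $p^{2m} \binom{m-q/p}{m}$, and $p^{2n} \binom{n-1+q/p}{n}$, using $-(p-q)/p = -1 + q/p$ in the last. After cancelling the common factor $p^{2(m+n)}$, the identity to prove collapses to
\[ \binom{m-q/p}{m+n} \binom{m+n}{m} = (-1)^n \binom{m-q/p}{m} \binom{n-1+q/p}{n}. \]

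I would verify this by expanding $\binom{m-q/p}{m+n}$ as a descending product of $m+n$ linear factors over $(m+n)!$, and splitting the numerator into its first $m$ and last $n$ factors. The first block is exactly $m!\binom{m-q/p}{m}$, while the last block $(-q/p)(-1-q/p)\cdots(-(n-1)-q/p)$ equals $(-1)^n n! \binom{n-1+q/p}{n}$ after extracting the sign from each factor. Combining with $(m+n)! = \binom{m+n}{m}\, m!\, n!$ yields the required identity.

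The only real work is sign and index bookkeeping in the last step; there is no serious obstacle. As a sanity check, the twisted symmetry noted after Definition~\ref{SUPER_PATALAN_EXTENDED} is compatible with the factorization, since transposing $G_{p,q} R G_{p,p-q}$ and using that $R$ is symmetric produces the factorization with $q$ replaced by $p-q$, as expected.
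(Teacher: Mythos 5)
Your proof is correct. Note that the paper itself states Lemma~\ref{FACTORIZATION_LEMMA} without any proof, only remarking that the factorization was used in \cite{RPM}; so there is no argument in the text to compare against, and your write-up supplies the missing justification. The reduction to the entrywise identity $Q(m,n)=Q(m,0)\binom{m+n}{m}^{-1}Q'(n,0)$ via Theorem~\ref{PATALAN_THM1} is the natural route, and the key step checks out: splitting the $m+n$ descending factors of $\binom{m-q/p}{m+n}$ into the first $m$ (giving $m!\binom{m-q/p}{m}$) and the last $n$ (giving $(-1)^n n!\binom{n-1+q/p}{n}$, since those factors are $-(j+q/p)$ for $j=0,\dots,n-1$) and then absorbing $(m+n)!=m!\,n!\binom{m+n}{m}$ yields exactly the required identity, including the sign $(-1)^n$. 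The $n=0$ column and the transpose consistency check are also as you describe. No gaps.
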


The author previously used the factorization of equation \eqref{FACTORIZATION_EQN} to prove that the inverse of the reciprocal Pascal matrix is an integer matrix \cite{RPM}.

Next we prove that the inverse of the Hadamard inverse of the super Patalan matrix is an integer matrix.
\begin{thm}
\label{INVERSE_SUPER_PATALAN_THM}
Let $Q$ be the $(p,q)$ super Patalan matrix, and let $H$ be the $n \times n$ matrix given by $H(i,j) = \frac{1}{Q(i,j)}$ for $0 \le i,j < n$.
Then the inverse of $H$ is an integer matrix.
\end{thm}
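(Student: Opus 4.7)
The plan is to invert $H$ by combining Lemma \ref{FACTORIZATION_LEMMA} with the classical $LU$ decomposition of the Pascal matrix. Since $G_{p,q}$ and $G_{p,p-q}$ are invertible diagonal matrices, taking the Hadamard inverse of the factorization $Q = G_{p,q} R G_{p,p-q}$ converts it into an ordinary matrix product: the Hadamard inverse of $R(i,j) = \binom{i+j}{i}^{-1}$ is the (non-reciprocal) Pascal matrix $P(i,j) = \binom{i+j}{i}$, so
\begin{equation*}
H \;=\; G_{p,q}^{-1}\,P\,G_{p,p-q}^{-1}, \qquad H^{-1} \;=\; G_{p,p-q}\,P^{-1}\,G_{p,q}.
\end{equation*}

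Next I would apply Vandermonde's identity $\sum_k \binom{i}{k}\binom{j}{k} = \binom{i+j}{i}$, which gives $P = L L^T$ for the lower-triangular Pascal matrix $L(i,j) = \binom{i}{j}$. Its inverse $L^{-1}(i,j) = (-1)^{i-j}\binom{i}{j}$ is an integer matrix, and substituting yields
\begin{equation*}
H^{-1} \;=\; \bigl(L^{-1} G_{p,p-q}\bigr)^T \bigl(L^{-1} G_{p,q}\bigr).
\end{equation*}
Since the $(i,j)$ entry of $L^{-1} G_{p,q}$ is $(-1)^{i-j}\binom{i}{j}\,Q_{p,q}(j,0)$, the theorem reduces to showing that the column-zero entries $Q_{p,q}(j,0)$, and similarly $Q_{p,p-q}(j,0)$, are integers; once that is done, $H^{-1}$ is a product of integer matrices and is therefore integer.

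This integrality claim is the main obstacle. I would prove it using the Patalan generating function $A(x) = \sum_n a(n)\, x^n = (1 - (1-p^2 x)^{1/p})/(p x)$, together with the known fact that the Patalan numbers $a(n)$ of order $p$ are integers. Rearranging gives $(1 - p^2 x)^{1/p} = 1 - p x A(x)$, and hence
\begin{equation*}
(1 - p^2 x)^{-1/p} \;=\; \frac{1}{1 - p x A(x)} \;=\; \sum_{k \ge 0} p^k x^k A(x)^k \;\in\; \mathbb{Z}[[x]].
\end{equation*}
Raising to the integer power $p-q$ preserves integer coefficients, and by Theorem \ref{PATALAN_THM1} the coefficients of $(1 - p^2 x)^{-(p-q)/p}$ are precisely the entries $Q_{p,q}(n,0)$; the same argument applied to $(1 - p^2 x)^{-q/p}$ handles $Q_{p,p-q}(n,0)$, completing the reduction.
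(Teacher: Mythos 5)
Your proposal is correct and follows essentially the same route as the paper: Hadamard-inverting the factorization $Q = G_{p,q} R G_{p,p-q}$ of Lemma \ref{FACTORIZATION_LEMMA} to get $H^{-1} = G_{p,p-q} B^{-1} G_{p,q}$ and concluding that all three factors are integer matrices. The only difference is that you explicitly verify the two facts the paper takes for granted --- that the Pascal matrix has an integer inverse (via $B = LL^{T}$) and that the diagonal entries $Q(i,0)$ are integers (via the geometric-series expansion of $(1-p^2x)^{-1/p}$) --- which is a welcome strengthening rather than a deviation.
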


\begin{proof}
By Lemma \ref{FACTORIZATION_LEMMA}, 
\begin{equation}
H = G_{p,q}^{-1} B G_{p,p-q}^{-1},
\end{equation}
where $B$ is the Pascal matrix with $B(i,j) = \binom{i+j}{i}$.
Then
\begin{equation}
H^{-1} = G_{p,p-q} B^{-1} G_{p,q}.
\end{equation}
Since $B^{-1}$, $G_{p,q}$, and $G_{p,p-q}$ are all integer matrices, it follows that $H^{-1}$ also is an integer matrix.
\end{proof}

\section{Conclusion}
We have proposed a definition of super Patalan numbers that generalizes the
super Catalan numbers of Gessel, in that the super Catalan numbers are 
the super Patalan numbers of order $p$.
The super Patalan numbers have a number of properties that generalize the
corresponding properties of the super Catalan numbers, in particular
equations \eqref{COLUMN_ONE_EQN}, \eqref{BINOMIAL_IDENT}, \eqref{RECURRENCE_TWO_VAR} and \eqref{GEN_FCN_TWO_VAR}.
We also prove a multiplicative identity for the extended super Patalan matrix, and we give a convolutional recurrence generalizing the well known recurrence for the Catalan numbers.

AMS Classification Numbers: 5A10, 11B37, 11B75, 15A36.

Keywords: Catalan numbers, Patalan numbers, super Catalan numbers, super Patalan numbers, generating function, Pascal matrix.

(Concerned with sequences A025748--A025757, A068555, A097188, A248324--A248326, A248328, A248329, and A248332.)
\end{document}